\newtheorem{lema}{Lemma}[section]
\newtheorem{teo}[lema]{Theorem}
\newtheorem{pro}[lema]{Proposition}
\newtheorem{remark}[lema]{Remark}
\title[Kolmogorov system]{Crossing limit cycles in Piecewise Smooth Kolmogorov Systems: An application to Palomba's model}
\author[Y. R. Carvalho] {Yagor Romano Carvalho$^{1}$}
\address{$^1$ Mathematics Department, Universidade de São Paulo, São Carlos, Brazil}
\email{$^1$yagor.carvalho@usp.br}
\author[L. F. S Golveia] {Luiz F.S. Gouveia$^*$}
\address{$^*$ UNESP, São José do Rio Preto, Brazil/ UNICAMP, Campinas, Brazil}
\email{$^*$fernando.gouveia@unesp.br}
\author[Oleg Makarenkov] {Oleg Makarenkov$^2$}
\address{$^{2}$ University of Texas at Dallas, Dallas, USA}
\email{$^2$ makarenkov@utdallas.edu}
\subjclass[2010]{Primary 34C07, 34C23, 37C27}
\keywords{Center-focus, cyclicity, limit cycles, weak-focus order, Lyapunov quantities, Lotka-Volterra Systems, Kolmogorov Systems}
\begin{document}
	
	\maketitle

	\begin{abstract} 
	In this paper, we study the number of isolated crossing periodic orbits, so-called crossing limit cycles, for a class of piecewise smooth Kolmogorov systems defined in two zones separated by a straight line. In particular, we study the number of crossing limit cycles of small amplitude. They are all nested and surround one equilibrium point or a sliding segment.  We denote by $\mathcal M_{K}^{p}(n)$ the maximum number of crossing limit cycles bifurcating from the equilibrium point via a degenerate Hopf bifurcation for a piecewise smooth Kolmogorov systems of degree $n=m+1$. We make a progress towards the determination of the lower bounds $M_K^p(n)$ of crossing limit cycles bifurcating from the equilibrium point via a degenerate Hopf bifurcation for a piecewise smooth Kolmogorov system of degree $n$. Specifically, we shot that $M_{K}^{p}(2)\geq 1$, $M_{K}^{p}(3)\geq 12$, and $M_{K}^{p}(4)\geq 18$. In particular, we show at least one crossing limit cycle in Palomba's economics model, considering it from a piecewise smooth point of view. To our knowledge, these are the best quotes of limit cycles for piecewise smooth polynomial Kolmogorov systems in the literature.
\end{abstract}


\section{Introduction}
The interest in the study of planar piecewise smooth systems is motivated by rich applications in the modeling of real-world phenomena. Development of the theory of piecewise smooth systems was pioneered by Andronov \cite{Andronov1966}.  Nowadays, the theory of piecewise smooth differential equations are used in various applications such as engineering, economy, physics, medicine, and biology, see \cite{AcaBonBro2011, Coombes, Ber2008,HenMicDale1997,Narendra2014,Ogata1990}. 
In this subject, the most classical situation studied in the plane is when two vector fields are defined in two half-planes separated by a straight line. As in the classical qualitative theory of polynomial systems, the location and the number of the isolated crossing periodic orbits, also called crossing limit cycles, have received special attention and can be seen as a natural extension of the 16th Hilbert problem for planar piecewise smooth polynomial vector fields, see for example \cite{ Gianna}. We note that particular cases from Hilbert's 16th problem help us better understand how the characteristics of polynomial systems interfere with the appearance of limit cycles.  This paper is the first work in the literature to show lower bounds for small limit cycles considering Kolmogorv piecewise systems. 
 This kind of systems can have two types of limit cycles: sliding limit cycles or crossing limit cycles. The first ones contains some segment of the line of discontinuity, and the second ones only contain some points of the line of discontinuity. More details on piecewise smooth differential systems can be found in \cite{MakLam2012}. For the polynomial planar case,  for example, piecewise smooth differential systems can be described as
\begin{equation}\label{eqpie}
	\begin{aligned}
		\begin{array}{l}
			\dot{x}=P^{+}(x,y,\lambda^{+}), \; \; \mbox{for} \; y \geq 0, \\
			\dot{y}=Q^{+}(x,y,\lambda^{+}),
		\end{array}  \; \; \; \; \; \; \;  \; \; 
		\begin{array}{l}
			\dot{x}=P^{-}(x,y,\lambda^{-}), \; \; \mbox{for} \; y \leq 0, \\
			\dot{y}=Q^{-}(x,y,\lambda^{-}),
		\end{array}  \\
	\end{aligned}
\end{equation}
where $P^{\pm}$ and $Q^{\pm}$ are polynomials of degree $n$ in the variables $(x,y)$, and $(\lambda^{+},\lambda^{-} )\in \mathbb{R}^{2n^2+6n+4}$ are perturbative parameters. Moreover,  the discontinuity manifold is given by $\Sigma= \{ (x,y)  \in \mathbb{R}^{2} \; | \; y=0\}$ that divides the plane into two half-plane. To define the trajectories on $\Sigma$, we will consider the Filippov convention \cite{Fil1988}.



In this paper,  we are interested in studying the number of small crossing limit cycles bifurcating from the origin in the class of piecewise smooth polynomial differential systems \eqref{eqpie} defined in two zones separated by a straight line where in each zone we have a Kolmogorov system, in which the origin is a monodronic equilibrium point for both Kolmogorov systems. Such a class of systems was proposed by Kolmogorov in \cite{Kolmogorov1936} and is a generalization of the well-known Lotka-Volterra systems, see \cite{Lotka1925, Volterra1927}. Usually, the Lotka-Volterra systems describe situations of competition between two species, and Kolmogorov proposed a generalization of this kind of system.  We can define a planar Kolmogorov differential system in the following way.
\begin{equation}\label{eqkolp}
	\begin{array}{lll}
		\dot{x}&=&x\,X_m(x,y,\lambda),\\
		\dot{y}&=&y\;Y_m(x,y,\lambda),
	\end{array}
\end{equation} 
where $X_m$ and $Y_m$ are polynomials of degree $m$. In particular when $m=1$, we have the Lotka-Volterra systems. Moreover, this class of systems has a wide range of applications, such as chemical reactions \cite{Her1990}, economics \cite{Gandolfo2008,Goodwin1967,SoRic2002} and hydrodynamics \cite{Bus1981}. Therefore in this work,  piecewise smooth Kolmogorov systems can be described by
	\begin{equation}\label{eqkolpdesc}
		\begin{aligned}
			\begin{array}{l}
				\dot{x}=x\,X_m^{+}(x,y,\lambda^{+}), \; \; \mbox{for} \; y \geq 0, \\
				\dot{y}=y\;Y_m^{+}(x,y,\lambda^{+}),
			\end{array}  \; \; \; \; \; \; \;  \; \; 
			\begin{array}{l}
				\dot{x}=x\,X_m^{-}(x,y,\lambda^{-}), \; \; \mbox{for} \; y \leq 0, \\
				\dot{y}=y\;Y_m^{-}(x,y,\lambda^{-}),
			\end{array}  \\
		\end{aligned}
	\end{equation}
	where $X_m^{\pm}$ and $Y_m^{\pm}$ are polynomials of degree $m$ in the variables $(x,y)$, and $(\lambda^{+},\lambda^{-} ) \in \mathbb{R}^{2m^2+6m+4}$ are perturbative parameters. Even though in biological applications, the Kolmogorov system is usually considered in the first quadrant, in this work we will make a change of coordinates to translate the equilibrium point of center type from the first quadrant to the origin, facilitating computations and this justifying the choice of such discontinuity line $\Sigma= \{ (x,y)  \in \mathbb{R}^{2} \; | \; y=0\}$. Kolmogorov system of form \eqref{eqkolpdesc} can be viewed as a truncation of a number of biologic competing models considered in the recent literature \cite{Joydeb, Cortes, Luo}. which work with analytic $X_m^\pm $ and $Y_m^\pm$. Nullifying thermal variation and freshwater flux parameters in the Stommel-box model \cite{AlkAshJacQui2019,BudGriKus2022} one obtains a non-smooth system of form  \eqref{eqkolpdesc}. Nullifying one of the efficacies to inhibit prostate cancer cells proliferation or eliminating the irreversible mutation, the model of prostate cancer  \cite{yan2022dynamics} takes the form of Kolmogorov system \eqref{eqkolpdesc} too.

	Usually, for polynomial vector fields of degree $n$, studying the maximum number of limit cycles
	$\mathcal H(n)$ (called by Hilbert number)  in terms of their degrees is an open problem and approached in the second part of the 16th Hilbert Problem. On the other hand, there is a local version of the 16th Hilbert Problem, where $\mathcal M(n)$ is denoted as the maximum number of limit cycles bifurcating from a monodromic equilibrium point and, 
	$\mathcal M(n) \leq \mathcal H(n)$. It is well known that linear systems do not have limit cycles then $\mathcal M(1)=\mathcal H(1)=0$ and for other degrees, we can find a good description of the respective quotas of limit cycles in \cite{Bautin1954,ProTor2019}. For piecewise smooth polynomial vector fields of degree $n$, the respective  numbers are denoted by $\mathcal M^{c}_p(n)$ and $ \mathcal H^{c}_p(n)$, where the upper index $c$ means only crossing limit cycles are counted. 
	
	For planar piecewise smooth polynomial vector fields with two zones separated by a straight line, results on the number of of crossing limit cycles are obtained in \cite{Goutor,GuoYuChe2018}.
	 Moreover, the authors in \cite{GouvTorre2021} have given new lower bounds for the local cyclicity of piecewise smooth polynomial vector fields, $\mathcal M^{c}_p(n)$, for $n=3,4,5$.
	
	Regarding planar Kolmogorov systems \eqref{eqkolp} of degree $n=m+1$, the classical theory defines $\mathcal M_{K}(n)$ as the maximal number of small limit cycles bifurcating from an equilibrium point of the system \eqref{eqkolp}. In this context, the 16th Hilbert Problem is even more restricted, because perturbative parameters must keep the system in the Kolmogorov form and consequently $\mathcal M_{K}(n) \leq \mathcal M(n)$. As expected, for $n=1$ we have $\mathcal M_{K}(1)=0$. Bautin in \cite{Bautin1952} showed that $\mathcal M_{K}(2)=0$. In \cite{LloydPearson1996}, the authors came up with the first example of a cubic Kolmogorov system with six limit cycles. Recently the authors in \cite{kol} showed a new example of a cubic Kolmogorov system with six limit cycles, that is, $\mathcal M_{K}(3) \geq 6$. They also provided the first quartic Kolmogorov system with thirteen limit cycles, $\mathcal M_{K}(4) \geq 13$, and the first quintic Kolmogorov system with twenty-two limit cycles, $\mathcal M_{K}(5) \geq 22$.

	As far as we know, there are still no works aimed at studying sewing limit cycles for piecewise smooth Kolmogorov system. We will consider local cyclicity $\mathcal M^{c}_{p_K}(n)$ as the maximal number of small limit cycles, all nested and surrounding one equilibrium point or a sliding segment of the system \eqref{eqkolpdesc}. As in the analytical case, it is expected $\mathcal M^{c}_{p_K}(n) \leq \mathcal M^{c}_{p}(n)$. Our first objective consists in verifying that $\mathcal M^{c}_{p_K}(n)$ is still null for $n=1,2$, which happens in the classical theory for the Kolmogorov systems. 
	
	For $n=1$ we do not have crossing limit cycles in \eqref{eqkolpdesc}, because the discontinuity line does not have crossing regions, that is,  $\mathcal M^{c}_{p_K}(1)=0$. The same happens if we consider the straight line $\{x=0\}$. This fact is even more general, i.e., we can consider the line $\{y-a \, x=0 \}$, and then the Kolmogorov vector fields will always point to the same side of the straight line, which makes it impossible to create a crossing limit cycle. After that,  we show the existence of at least one crossing limit cycle in Palomba's economics model, considering it from a piecewise smooth point of view \eqref{eqkolpdesc} for $m=1$, that is, $n=2$ and $\mathcal M^{c}_{p_K}(2) \geq 1$. We also provide lower bounds of local cyclicity for the piecewise smooth Kolmogorov system for $n=3$ and $n=4$.



	%
	%
	%
	
	%
	
	Our main result in this work is given below.
	
	\begin{teo}\label{teo}
		The lower bounds of small limit cycles $\mathcal M^{c}_{p_K}(n)$ for the piecewise smooth Kolmogorov systems \eqref{eqkolpdesc} for $n=2,3,$ and $4$ are, at least, $1$, $12$, and $18$, respectively (as illustrated in Table 1).
	\end{teo}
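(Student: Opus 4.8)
The plan is to prove existence of the claimed numbers of crossing limit cycles by a degenerate Hopf bifurcation analysis carried out separately in each of the three degrees $n=2,3,4$, using the standard machinery of Lyapunov (focal) quantities adapted to the piecewise-smooth Filippov setting. Let me sketch the strategy I would follow.

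First I would set up the return map in the crossing region. After the change of coordinates that places the monodromic equilibrium at the origin and fixes the discontinuity line $\Sigma=\{y=0\}$, I would bring each of the two Kolmogorov vector fields in \eqref{eqkolpdesc} into a normal form near the origin in which the linear part is a focus/center (e.g.\ $\dot x = -y + \cdots$, $\dot y = x + \cdots$ after rescaling), taking care that the admissible parameter perturbations $\lambda^\pm$ keep the system in Kolmogorov form — this is the essential constraint distinguishing $\mathcal M^{c}_{p_K}(n)$ from the general piecewise bound $\mathcal M^{c}_{p}(n)$. I would then compute, for each zone, the half-return maps $\Pi^{\pm}$ from $\Sigma$ back to $\Sigma$ (one for $y\ge 0$, one for $y\le 0$), and form the full displacement function $\Delta(\rho)=\Pi^{+}(\rho)-(\Pi^{-})^{-1}(\rho)$ on a transversal parametrized by $\rho>0$. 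Its zeros correspond to crossing limit cycles. Expanding $\Delta$ in powers of $\rho$ yields the piecewise Lyapunov quantities $V_1, V_2, V_3, \dots$, and the number of small crossing limit cycles is controlled, via the usual Bautin-type argument, by the number of these quantities one can make vanish successively while keeping independence of their linearizations.

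The computational heart of the proof, carried out degree by degree, would be: (i) compute the Lyapunov quantities $V_k$ as polynomials in the parameters $\lambda^{\pm}$; (ii) solve the chain $V_1=V_2=\cdots=V_{k-1}=0$ while arranging $V_k\neq 0$, thereby exhibiting a weak focus of the appropriate order; and (iii) verify that the Jacobian of $(V_1,\dots,V_{k})$ with respect to a chosen subset of parameters has full rank at the selected parameter value, so that a perturbation unfolds $k$ simple zeros of $\Delta$ and hence $k$ crossing limit cycles. For $n=2$ (Palomba's model, $m=1$) the claim $\mathcal M^{c}_{p_K}(2)\ge 1$ should follow from showing $V_1$ can be made nonzero under an admissible perturbation after $V_0$ (the tangency/crossing balance) is normalized, producing one limit cycle. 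For $n=3$ and $n=4$ I would exhibit explicit parameter values at which $V_1=\cdots=V_{11}=0$, $V_{12}\neq 0$ (respectively $V_1=\cdots=V_{17}=0$, $V_{18}\neq 0$) together with an independence (full-rank Jacobian) check, and then invoke the standard unfolding lemma to bifurcate $12$ (respectively $18$) simple crossing limit cycles; the concrete witnesses and the rank computations are what populate Table~1.

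The main obstacle I expect is the algebra of the Lyapunov quantities under the Kolmogorov constraint. Keeping the system in Kolmogorov form forces the vector field to carry the factors $x$ and $y$, which rigidly couples the admissible perturbation directions and makes the $V_k$ highly structured but cumbersome; verifying $V_{12}\neq 0$ and, crucially, the \emph{independence} of the first twelve (or eighteen) quantities requires a controlled rank computation on large symbolic polynomials, typically pushed through a computer algebra system and often completed over a well-chosen prime field to certify the generic rank. A secondary but genuine difficulty specific to the piecewise setting is ensuring that the constructed periodic orbits are true \emph{crossing} cycles — that the orbit genuinely meets $\Sigma$ transversally in the crossing region on both sides and does not degenerate into a sliding cycle — which I would guarantee by checking the sign conditions on $Q^{\pm}$ along $\Sigma$ at the chosen parameters. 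Once these checks are in place for each degree, the three lower bounds $1$, $12$, and $18$ follow simultaneously.
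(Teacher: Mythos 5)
Your proposal diverges from the paper's proof in a way that creates a genuine gap: it never uses the pseudo-Hopf (sliding-segment) bifurcation, which is where one cycle in each of the three counts actually comes from. The paper's numbers decompose as $1=0+1$, $12=11+1$, $18=17+1$: the rank of the \emph{linearized} Lyapunov constants supplies $0$, $11$, resp.\ $17$ cycles, and the remaining cycle is always produced by Proposition~\ref{prop_2lc}, i.e.\ by switching on a constant term ($d$ there, $\varepsilon$ in Palomba's model) that opens a sliding segment on $\Sigma$ and births a crossing cycle surrounding it; this ``$+1$'' is also built into the way Proposition~\ref{thm:chrorder1} is stated ($k-1$ independent constants yield $k$ cycles). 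For $n=2$ this is not a refinement but the whole theorem: for the piecewise Palomba system \eqref{Palombadisc} the paper observes that the relevant second constant vanishes ($L_2=0$ when $c=\varepsilon=0$), so your plan of ``making $V_1$ nonzero to produce one limit cycle'' fails --- a nonzero trace alone creates no cycle; one needs either a nonvanishing higher-order constant of opposite sign (absent in this model) or the sliding mechanism you omit. Without the pseudo-Hopf step your strategy tops out at $0$, $11$, $17$ even if everything else in it works.

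For $n=3,4$ your route is also genuinely different from the paper's and substantially harder, to the point that it is unclear it can be completed. You propose the Bautin-type scheme: compute the full Lyapunov quantities, locate a weak focus of order $12$ (resp.\ $18$) inside the Kolmogorov class, then unfold. The paper never produces such weak foci and never solves a center-focus chain. Instead it exhibits explicit Kolmogorov \emph{centers} --- systems \eqref{komquar_1} and \eqref{komquar_4}, certified by integrating factors --- puts them in normal form, and applies Christopher's first-order method (Proposition~\ref{thm:chrorder1}): only the linear parts, in the perturbation parameters, of the first fourteen Lyapunov constants are computed, and a rank computation ($11$, resp.\ $17$, as indicated by the independent parameters listed) gives the cycles; no vanishing conditions ever need to be solved and no $V_k\neq 0$ certificate is needed. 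Finally, note that your insistence that the perturbations remain in Kolmogorov form is not what the paper does: the perturbations used in Proposition~\ref{thm:chrorder1} are arbitrary piecewise polynomial terms added to the normal form of the center, not Kolmogorov-preserving ones. Your stricter constraint (arguably more faithful to the definition of $\mathcal M^{c}_{p_K}$, but not the paper's reading) shrinks the available parameter space, and there is no evidence that the ranks $11$ and $17$ --- hence the bounds $12$ and $18$ --- survive it; so, as written, your plan cannot be expected to reach the claimed numbers.
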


%
%
%
%
%
%
	
		\begin{table}[h]
		\begin{center}
			\begin{tabular}{|l|c|c|}
				\cline{2-3}
				\hline
				\multicolumn{1}{|c|}{deg} & Kolmogorov & Piecewise smooth Kolmogorov  \\
				\hline  
			 
				$n=1$   & $\mathcal M_{K}(1)=0$ & $\mathcal M^{c}_{p_K}(1)=0,$  \\
				
				$n=2$ &  $\mathcal M_{K}(2)=0$& $\mathcal M^{c}_{p_K}(2) \geq 1,$   \\
				
				$n=3$ &$\mathcal M_{K}(3) \geq 6$& $\mathcal M^{c}_{p_K}(3) \geq 12,$   \\
				
				$n=4$  &$\mathcal M_{K}(4) \geq 13$& $\mathcal M^{c}_{p_K}(4)\geq 18.$  \\ 
				\hline
			\end{tabular}
		\end{center}
		
		\vspace{0.2cm}
		
		\caption{Summary of local Hilbert numbers for Kolmogorov systems of degree $1\leq n \leq 4$ for smooth and piecewise smooth cases.}
	\end{table}
	
	This paper is structured as follows. In Section~\ref{se:preliminaries} we present a method to compute Lyapunov constants. In addition, we show some results that allow us to obtain limit cycles using the Taylor expansions of the Lyapunov constants. In Section~\ref{secpalomba} we show the existence of at least one crossing limit cycle in Palomba's economics model, considering it from a piecewise smooth point of view, i.e. viewing it as a piecewise smooth Kolmogorov system of degree $n=2$. Finally, in the Section~\ref{se:bifurcation} we prove the remaining cases of our  main result.


\section{Preliminaries}\label{se:preliminaries}

In this section, we recall some basic concepts of piecewise smooth differential systems and how to obtain the coefficients of the Taylor series of the return map, the so-called Lyapunov constants. Let us consider the following piecewise planar differential system
	\begin{equation}\label{eqkolpg}
	\begin{aligned}
		\begin{array}{l}
			\dot{x}=X^{+}(x,y,\lambda^{+}), \; \; \mbox{for} \; y \geq 0, \\
			\dot{y}=Y^{+}(x,y,\lambda^{+}),
		\end{array}  \; \; \; \; \; \; \;  \; \; 
		\begin{array}{l}
			\dot{x}=X^{-}(x,y,\lambda^{-}), \; \; \mbox{for} \; y \leq 0, \\
			\dot{y}=Y^{-}(x,y,\lambda^{-}).
		\end{array}  \\
	\end{aligned}
\end{equation}
If we consider $Z^{\pm}=(X^{\pm},Y^{\pm})$ then following the concepts introduced by Fillipov, see \cite{Fil1988}, we can have on $\Sigma= \{ y=0\}$ three different behaviors that we will call crossing, escaping, and sliding ($\Sigma^{C}$, $\Sigma^{S}$, $\Sigma^{E}$). These concepts are valid in general for a curve that is the inverse image of a regular value. Considering $f(x,y)=y$, we say that $p \in \Sigma^{C}$ if, and only if $Z^{+}f(p)\cdot Z^{-}f(p)>0$, where $Z^{\pm}f(p)=\langle \nabla f(p),Z^{\pm}f(p)\rangle$. On the other hand, we have $p \in \Sigma^{D}\bigcup\Sigma^{S}$ if, and only if $Z^{+}f(p)\cdot Z^{-}f(p)<0$, see Figure~\ref{fig1}.

\begin{figure}[h]\vspace{0.3cm}
	\begin{overpic}[width=10.0cm]{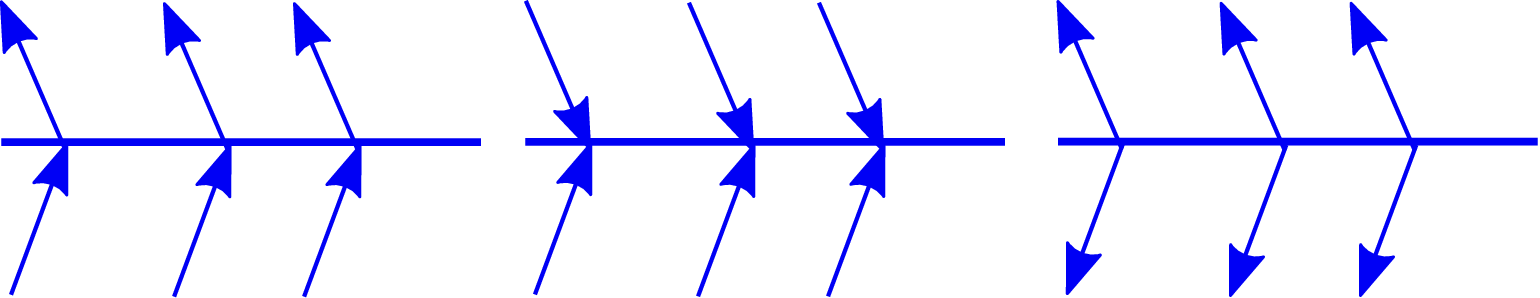}
	\end{overpic}
	\caption{Crossing, Sliding and escaping segments.}
	\label{fig1}
\end{figure}


When the origin of the system \eqref{eqkolpg} is monodromic, we can study the return map. Since we have two vector fields ($Z^{\pm}$), we will have two half-return maps, that we call by $\Pi^{\pm}$, and their compositions provide us with the global return map $\Pi(\rho)=\Pi^{-}(\Pi^{+}(\rho))$. Analogously to the classic case, the zeros of the return map correspond to isolated periodic orbits. Equivalently, we can obtain periodic orbits finding zeros of the displacement map $\Pi(\rho)-\rho$ or the difference map $\Delta(\rho)=(\Pi^{-})^{-1}(\rho)-\Pi^{+}(\rho)$, see Figure~\ref{fig2}. The coefficients of the Taylor series of the difference map $\Delta$ at the origin are also called Lyapunov constants for piecewise smooth polynomial vector fields.
\begin{figure}[h]\vspace{0.3cm}
	\begin{overpic}[width=10.0cm]{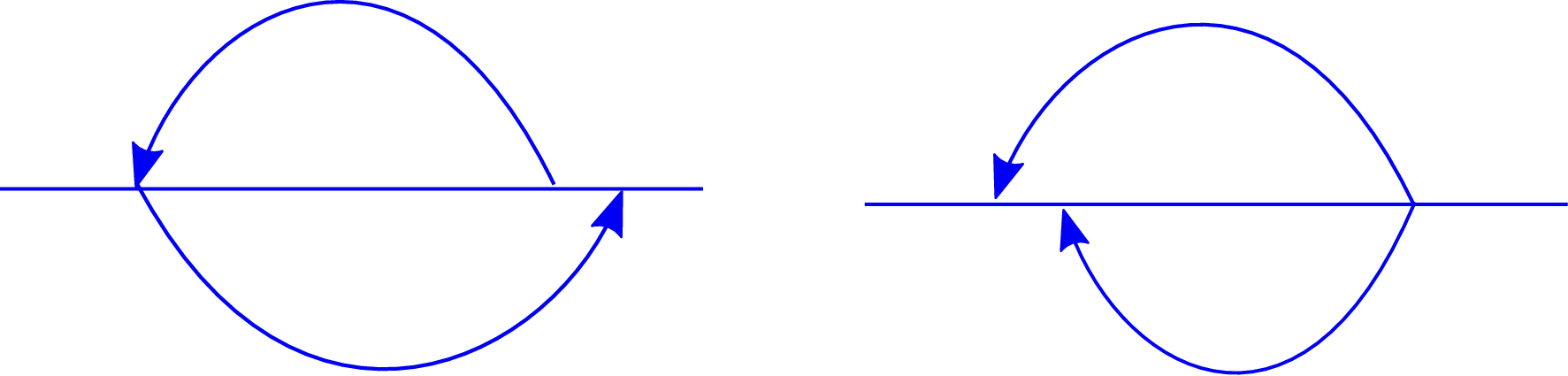}
	\end{overpic}
	\caption{Crossing, Sliding and escaping segments.}
	\label{fig2}
\end{figure}

Since we will be dealing with the nondegenerate center (i.e. the linear part of the center has eigenvalues purely imaginary), and the perturbative parameters will be different on $y>0$ and $y<0$, we can write system~\eqref{eqkolpg} in the following way

\begin{equation}\label{eqfn}
\begin{array}{lll}
	\dot{x} & = &-y+P^{\pm}(x,y,\lambda^{\pm}),\\
	\dot{y} & = & \phantom{-}x+Q^{\pm}(x,y,\lambda^{\pm}),
	\end{array}
\end{equation} 
where $P^{\pm}$, $Q^{\pm}$ are polynomials without constant or linear terms. With this hypothesis the trace of the Jacobian matrix of \eqref{eqfn} at the origin is zero and the piecewise smooth linearized system does not have sliding or escaping segments. Passing system~\eqref{eqfn} to polar coordinates $(r,\theta)$ and leaving $\theta$ as the new independent variable, we get the following analytic differential equations
\begin{equation}\label{eqfn1}
	\begin{array}{lll}
		\dfrac{dr}{d\theta} & = &\displaystyle\sum_{i=2}^{\infty}R^{+}_{i}(\theta,\lambda^{\pm}) \,  r^{i}, \mbox{ $\theta \in [0,\pi]$}\\
		\dfrac{dr}{d\theta} & = &\displaystyle\sum_{i=2}^{\infty}R^{-}_{i} (\theta,\lambda^{\pm}) \,  r^{i}, \mbox{ $\theta \in [\pi,2\pi]$},
	\end{array}
\end{equation} 
where $R^{\pm}_{i}(\theta,\lambda^{\pm})$ are trigonometrical polynomials in the variable $\theta$.  If we consider the initial condition $r^{\pm}(0,\rho,\lambda^{\pm})=\rho$ then in a neighborhood of the origin we can write the solutions of \eqref{eqfn1} with these initial conditions as
\begin{equation*}
r^{\pm}(\theta,\rho,\lambda^{\pm})=\rho+\displaystyle\sum_{i=2}^{\infty}r_{i}^{\pm}(\theta,\lambda^{\pm}) \, \rho^{i},
\end{equation*} 
%
%
which is also analytic. By plugging it into the respective equation \eqref{eqfn1} we can find each $r_{i}^{\pm}(\theta,\lambda^{\pm}), \; i \geq 2$,  by solving simple differential equations with initial conditions $r_{i}^{\pm}(0,\lambda^{\pm})=0$. Therefore, as our piecewise smooth systems \eqref{eqfn} are defined separated by the straight line $\{y = 0\}$, we can define the half-return maps close to the origin by
\begin{equation*}
	\begin{array}{lll}
	\Pi^{+}(\rho) & = &\rho +\displaystyle\sum_{i=2}^{\infty}r_{i}^{+}(\pi,\lambda^{+}) \, \rho^{i},\\
	(\Pi^{-})^{-1}(\rho) & = &\rho +\displaystyle\sum_{i=2}^{\infty}r_{i}^{-}(-\pi,\lambda^{-}) \, \rho^{i},
	\end{array}
\end{equation*} 
and finally,
\begin{equation}\label{displacement}
	\Delta(\rho)=\displaystyle\sum_{i=2}^{\infty}(r_{i}^{-}(-\pi,\lambda^{-})-r_{i}^{+}(\pi,\lambda^{+})) \, \rho^{i}=\displaystyle\sum_{i=2}^{\infty}L_{i} \, \rho^{i}.\\
\end{equation} 
As the systems \eqref{eqfn} are analytic, follow that the half-return maps $\Pi^{\pm}$, and $\Delta$  are also analytic.


 We call the coefficients $L_{i}$ in \eqref{displacement} as the generalized  Lyapunov constants associated with system \eqref{eqfn}.  As in the classic case, the first non-vanishing $L_{k}$ provides the stability of the origin and we will say that the origin is a generalized weak-focus of order $k$, that is, $\Delta(\rho)=L_{k} \, \rho^{k} + O(p^{k+1})$ in the piecewise smooth context. We highlight that the above procedure follows closely the classical Lyapunov algorithm described, for example, in \cite{AndLeoGorMai1973} for analytic vector fields. However, while in the classical case, the Lyapunov constants with even indices do not influence the number of limit cycles that can arise, in the piecewise smooth environment it may influence, see \cite[Remark 3.3]{GouvTorre2021}. Therefore, it is expected that we have at least twice as many limit cycles in piecewise smooth systems, once the Lyapunov constants of even indices can also be considered. More details and properties about Lyapunov constants in the classic case can be found in the works \cite{BlowLlo1984}.

\subsection{Bifurcations of Small Limit Cycles}\label{BLC}

Even though in classical Hopf bifurcation for analytic vector fields we obtain at most one limit cycle bifurcating from a weak focus of order one located at the origin, in the piecewise smooth setting we can obtain two limit cycles. The first one comes from the classical Hopf bifurcation and the second one rises from the sliding segment, see \cite{GouvTorre2021}. We summarize this fact in the next result that was proved in \cite{Goutor}.

\begin{pro}\label{prop_2lc} Consider the perturbed system
		\begin{equation*}\label{eqkolpg1}
		\begin{aligned}
			\begin{array}{l}
				\dot{x}=-(1+c^2)y+\displaystyle\sum_{k+l=2}^{\infty}a_{kl}^{+}x^{k}y^{l}, \;  \; \mbox{for} \; y \geq 0, \\
				\dot{y}=\phantom{-} x+2cy+\displaystyle\sum_{k+l=2}^{\infty}b_{kl}^{+}x^{k}y^{l},
			\end{array}  \; \; \; \;
			\begin{array}{l}
				\dot{x}=-y+\displaystyle\sum_{k+l=2}^{\infty}a_{kl}^{-}x^{k}y^{l}, \; \; \mbox{for} \; y \leq 0, \\
				\dot{y}= \phantom{-} d+x+\displaystyle\sum_{k+l=2}^{\infty}b_{kl}^{-}x^{k}y^{l}.
			\end{array}  \\
		\end{aligned}
	\end{equation*}
	If $a_{11}^{+}+2 \, b^{+}_{02}+b_{20}^{+} \neq a_{11}^{-}+2 \, b_{02}^{-}+b_{20}^{-}$ then there exist $c$ and $d$ arbitrarily small such that two crossing limit cycles of small amplitude bifurcate from the origin.
\end{pro}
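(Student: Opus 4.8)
The plan is to produce the two cycles through two independent unfoldings organized by the three leading coefficients of the difference map: a degenerate Hopf bifurcation governed by $c$ and a pseudo-Hopf (sliding) bifurcation governed by $d$. By the construction leading to \eqref{displacement} and the analyticity established there, the difference map of the perturbed system is jointly analytic in $(\rho,c,d)$ near the origin, so I would first write
\[
\Delta(\rho;c,d)=v_0(c,d)+v_1(c,d)\,\rho+v_2(c,d)\,\rho^{2}+O(\rho^{3}),
\]
with $v_0,v_1,v_2$ analytic in $(c,d)$. The whole argument then reduces to showing that these three coefficients can be prescribed independently with $(c,d)$ arbitrarily small.

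The computational core is to evaluate the coefficients at the unperturbed value and their first-order dependence on the parameters. When $c=d=0$, both half-systems reduce to the linear center $(\dot x,\dot y)=(-y,x)+\cdots$, so each half-return map is the identity to first order and $v_0(0,0)=v_1(0,0)=0$. Carrying the Lyapunov recursion of Section~\ref{se:preliminaries} one order further and subtracting the two half-contributions yields $v_2(0,0)=K\big[(a_{11}^{+}+2b_{02}^{+}+b_{20}^{+})-(a_{11}^{-}+2b_{02}^{-}+b_{20}^{-})\big]$ for an explicit nonzero constant $K$; by hypothesis this is nonzero, say $v_2(0,0)=\ell\neq0$, so the origin is a generalized weak-focus of order two. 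The factor $(1+c^2)$ is tuned precisely so that for $c\neq0$ the upper linear part has eigenvalues $c\pm\ii$, hence angular speed exactly $1$ and half-return multiplier $e^{c\pi}$; this gives $\partial_c v_1(0,0)\neq0$, so $c$ is an effective trace (first Lyapunov) parameter. Finally, the term $d$ displaces the lower equilibrium off $\Sigma$, making $v_0$ the pseudo-focal displacement with $\partial_d v_0(0,0)\neq0$. A key simplification is that for $d=0$ the origin remains an equilibrium of both half-systems for every small $c$, whence $v_0(c,0)\equiv0$ and $\partial_c v_0(0,0)=0$; the Jacobian of $(c,d)\mapsto(v_1,v_0)$ is therefore triangular with nonzero diagonal, hence nonsingular, and by the implicit function theorem any prescribed small values of $v_0$ and $v_1$ are attainable with $c,d$ arbitrarily small.

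With this control I would close by a rescaling and root-counting argument. Assume $\ell>0$, the case $\ell<0$ being analogous. Fix $\gamma\in(0,1/(4\ell))$, so that $g(u):=\ell u^{2}-u+\gamma$ has two simple positive roots $u_{-}<u_{+}$. For small $\epsilon>0$ choose $c,d$ (possible by the previous step) with $v_1=-\epsilon$ and $v_0=\gamma\epsilon^{2}$; substituting $\rho=\epsilon u$ gives
\[
\Delta(\epsilon u;c,d)=\epsilon^{2}\big(\ell u^{2}-u+\gamma\big)+O(\epsilon^{3})=\epsilon^{2}\big(g(u)+O(\epsilon)\big).
\]
Since $g$ has two simple positive zeros, for $\epsilon$ small enough the bracket has two simple zeros $u_{\pm}(\epsilon)$ near $u_{\pm}$, giving two simple zeros $\rho_{\pm}=\epsilon u_{\pm}(\epsilon)$ of $\Delta$ of amplitude $O(\epsilon)$. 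As the sliding segment created by $d$ has length $O(d)=O(\epsilon^{2})$, much smaller than $\epsilon$, both periodic orbits meet $\Sigma$ transversally in its crossing region, so they are genuine crossing limit cycles, arbitrarily small as $\epsilon\to0$.

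The main obstacle is the explicit second-order Lyapunov computation: one must run the recursion for $r_2^{\pm}$ on both half-systems, combine them in $\Delta$, and verify that the result collapses exactly to the stated combination with a nonzero factor $K$. The triangular structure of the parameter-to-coefficient map removes the only other potential difficulty, namely the simultaneous realizability of $v_0$ and $v_1$, so once the Lyapunov identity is pinned down the bifurcation argument is routine.
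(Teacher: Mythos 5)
Your proposal is correct and takes essentially the same route as the paper: the paper itself does not prove Proposition~\ref{prop_2lc} (it is quoted from \cite{Goutor}), but the remark following it sketches exactly your mechanism --- $d$ produces the pseudo-Hopf bifurcation through $L_0$, $c$ acts as the trace/Hopf parameter through $L_1$, and the stated hypothesis is precisely the nonvanishing of the second-order constant $L_2$. Your write-up simply supplies the details the paper delegates to the citation (the exact half-return multiplier $e^{c\pi}$, the triangular Jacobian of $(c,d)\mapsto(v_1,v_0)$, and the rescaling $\rho=\epsilon u$ for root counting), and these details check out, including the value $v_2(0,0)=\pm\tfrac{2}{3}\bigl[(a_{11}^{+}+2b_{02}^{+}+b_{20}^{+})-(a_{11}^{-}+2b_{02}^{-}+b_{20}^{-})\bigr]$.
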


\begin{remark}
We can write the displacement map 
\begin{equation*}
	\Delta(\rho)=L_{0}(c,d)+L_{1}(c,d) \, \rho+L_{2} \, \rho^2+\ldots,
\end{equation*}	
 where $L_{0}$ and $L_{1}$ are polynomials in $c$ and $d$. If $d=0$ then there is no sliding segment, and we have the classical Hopf bifurcation. On the other words, only one crossing limit cycle of small amplitude bifurcates when $c$ is small enough. The classical Hopf bifurcation, see \cite{MarMcC1976}, occurs when a complex conjugate pair of eigenvalues (with real part not null) of the linearised system at an equilibrium point becomes purely imaginary. Then the birth of a limit cycle from a weak focus of first-order arises varying the trace of the matrix of the linearized system from zero to a number small enough. In this way, we will denote by $L_1$ the trace of the perturbed system, clearly $L_0(0,0)=0$ for the unperturbed one. When $d\neq0$, inside the limit cycle of small amplitude, there is only a sliding segment. For the appropriate value of $d$, a second bifurcation arises. This is called pseudo-Hopf bifurcation, see more details in \cite{CasLliVer2017}. Then, using this argument we will show that $\mathcal M^{c}_{p_K}(2)\geq 1$. We observe that for the continuous case, $\mathcal M_{K}(2)=0$. However, in the non-smooth case, we have, at least, one crossing limit cycles.
\end{remark}

The degenerate Hopf bifurcation is a natural generalization of this bifurcation phenomenon when $k$ small limit cycles appear from a weak focus of order $k$. In general, the complete unfolding of $k$ limit cycles near a weak focus of order $k$ is only guaranteed when perturbations are analytic, see for example \cite{Rou1998}. When the perturbation is restricted to be a polynomial of some fixed degree this property is not automatic. Because of this, the problem of finding the cyclicity of a center is so difficult. A way to avoid these difficulties is to study lower bounds for cyclicity. Christopher, see \cite{Chr2006}, provides the necessary conditions to obtain lower bounds for the cyclicity of a center. The author details how we can use the first-order Taylor approximation of the Lyapunov constants to obtain lower bounds on the number of limit cycles near center-type equilibrium points. The author details how we can use the first-order Taylor approximation of the Lyapunov constants to obtain lower bounds on the number of limit cycles near center-type equilibrium points. Similar results can be found in \cite{ChiJac1989,Han1999}.
	
	\begin{pro}[\cite{Chr2006}]\label{thm:chrorder1}
	Consider the perturberd piecewise smooth polynomial differential system of degree $n$ of the form \eqref{eqfn}, with $P_{c},Q_{c}$ without linear or constant terms and the unperturbed vector field, $(\dot{x},\dot{y})=(-y+P_{c}(x,y),x+Q_{c}(x,y))$, has a center at the origin. If the first-order Taylor developments with respect to the perturbation parameters at the origin of the first $k-1$ Lyapunov Constants are linearly independent, then there exist perturbation parameters $(a_{kl}^{\pm},b_{kl}^{\pm})$ such that system
			\begin{equation*}\label{systemperturbated}
		\begin{aligned}
			\begin{array}{l}
				\dot{x}=-y+P_{c}(x,y)+\displaystyle\sum_{k+l=2}^{n}a_{kl}^{+}x^{k}y^{l}, \;   \mbox{if} \, y \geq 0, \\
				\dot{y}=\phantom{-} x+Q_{c}(x,y)+\displaystyle\sum_{k+l=2}^{n}b_{kl}^{+}x^{k}y^{l},
			\end{array}  
			\begin{array}{l}
				\dot{x}=-y+ P_{c}(x,y) +\displaystyle\sum_{k+l=2}^{n}a_{kl}^{-}x^{k}y^{l}, \; \mbox{if} \, y \leq 0, \\
				\dot{y}=\phantom{-} x+Q_{c}(x,y)+\displaystyle\sum_{k+l=2}^{n}b_{kl}^{-}x^{k}y^{l},
			\end{array}  \\
		\end{aligned}
	\end{equation*}
has, at least, k crossing limit cycles of small amplitude bifurcating from the origin.	
\end{pro}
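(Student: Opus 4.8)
The plan is to prove Proposition~\ref{thm:chrorder1} by recasting the existence of $k$ crossing limit cycles as a question about the rank of a certain linearization of the displacement map, and then invoking the implicit function theorem. Recall from \eqref{displacement} that the displacement map of the perturbed piecewise smooth system admits the analytic expansion $\Delta(\rho)=\sum_{i\geq 1} L_i(\mu)\,\rho^{i}$, where $\mu=(a_{kl}^{\pm},b_{kl}^{\pm})$ collects all perturbation parameters and the $L_i$ are the generalized Lyapunov constants. Since the unperturbed field has a center at the origin, we have $L_i(0)=0$ for every $i$. The isolated zeros of $\Delta$ near $\rho=0$ are exactly the small crossing limit cycles, so it suffices to produce a curve in parameter space along which $\Delta$ acquires $k$ simple positive roots accumulating at the origin.

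First I would record the hypothesis in its working form. The first-order Taylor developments of $L_1,\dots,L_{k-1}$ with respect to $\mu$ at $\mu=0$ being linearly independent means that, writing $L_i(\mu)=\langle v_i,\mu\rangle+O(|\mu|^2)$ for gradient vectors $v_i$, the family $\{v_1,\dots,v_{k-1}\}$ is linearly independent; equivalently, the Jacobian $\partial(L_1,\dots,L_{k-1})/\partial\mu$ evaluated at $\mu=0$ has rank $k-1$. By the standard consequence of the implicit function theorem (the linear independence of the differentials), one can choose a $(k-1)$-dimensional subfamily of parameters and a reparametrization so that $L_1,\dots,L_{k-1}$ may be prescribed independently and with arbitrarily small magnitude in a neighborhood of $\mu=0$, while $L_k(0)\neq 0$ (or can be arranged nonzero) controls the leading nonvanishing term.

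Next I would carry out the classical bifurcation argument of Andronov--Hopf type, adapted to the piecewise smooth setting, treating $L_1,\dots,L_{k-1}$ as free small parameters. The idea is to successively assign the $L_i$ alternating signs of decreasing magnitude, $|L_1|\ll|L_2|\ll\cdots\ll|L_{k-1}|\ll|L_k|$, so that the truncated polynomial $L_1\rho+L_2\rho^2+\cdots+L_k\rho^k$ has $k-1$ sign changes on an interval $(0,\delta)$ and hence $k-1$ simple roots there; adding the factor $\rho$ is immaterial since we seek $\rho>0$. A Rolle/Descartes-type argument on the displacement map, together with the fact that the higher-order remainder is dominated by $|L_k|\rho^k$ once the $\rho$-scale is chosen comparable to the ratios of the $L_i$, guarantees that each of these $k-1$ simple roots of the polynomial persists as a simple zero of the full analytic $\Delta$; combined with one additional root coming from the leading term $L_k\rho^k$ controlling stability, this yields $k$ simple zeros, i.e. $k$ crossing limit cycles of small amplitude. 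Because the differentials are linearly independent, the required sign assignments are realizable by genuine perturbation parameters keeping the system in the prescribed polynomial class of degree $n$.

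The main obstacle I anticipate is not the linear-algebraic step but the passage from the first-order (linear) information on the $L_i$ to conclusions about the full nonlinear displacement map: one must verify that the higher-order corrections in $\mu$ do not destroy the independence used to prescribe the $L_i$, and that the analytic remainder in $\rho$ genuinely stays subdominant on the relevant scale so that each simple root survives. This is precisely the point where Christopher's method in \cite{Chr2006} is invoked; the piecewise smooth adaptation requires checking that the same rank-and-sign mechanism applies to $\Delta=(\Pi^-)^{-1}-\Pi^+$, where, as noted in the discussion following \eqref{displacement}, both even- and odd-index Lyapunov constants are available as independent quantities. Assuming the analyticity of $\Pi^{\pm}$ established earlier, these estimates are standard, and the existence of $k$ limit cycles follows.
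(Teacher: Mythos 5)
Your overall strategy (implicit function theorem on the independent linear parts, then hierarchically small alternating values, then persistence of simple zeros) is the right skeleton --- it is Christopher's argument, which the paper actually cites rather than proves --- but as written it cannot reach $k$ cycles, and the two devices you introduce to close the gap both fail. First, you require $|L_1|\ll\cdots\ll|L_{k-1}|\ll|L_k|$ with $L_k$ ``arranged nonzero''. The hypothesis gives linear independence only of $L_1,\dots,L_{k-1}$; since the unperturbed field is a center, $L_k(0)=0$, and along the slice of parameter space furnished by the implicit function theorem $L_k$ is an analytic function of the prescribed values $(L_1,\dots,L_{k-1})$ vanishing at the origin, so $|L_k|=O\bigl(\max_i |L_i|\bigr)$. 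Hence $L_k$ can never be made to dominate $L_{k-1}$ (it may even vanish identically on that slice), let alone be given a prescribed sign. Second, even granting free control of $L_k$, your count is off by one: alternating signs among the $k$ coefficients $L_1,\dots,L_k$ produce $k-1$ sign changes, hence $k-1$ small positive simple zeros of $\Delta$; the ``additional root coming from the leading term $L_k\rho^k$'' does not exist, because the leading term only fixes the sign of $\Delta$ at the outer end of the interval, and that sign is already consumed in the $k-1$ alternations. Under the stated hypothesis your argument therefore yields at most $k-2$ crossing limit cycles, not $k$.

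The missing ingredient is specific to the piecewise smooth setting and is exactly what the paper combines with this proposition: besides the coefficients $L_i$, $i\geq 1$, the displacement map acquires a constant term and a genuinely independent linear term once one allows the perturbations of Proposition~\ref{prop_2lc}, namely $\Delta(\rho)=L_0(c,d)+L_1(c,d)\,\rho+\cdots$, where $d$ opens a sliding segment (pseudo-Hopf bifurcation, giving $L_0\neq 0$) and $c$ is the trace (classical Hopf bifurcation). These two parameters are independent of the $(a_{kl}^{\pm},b_{kl}^{\pm})$ and are switched on last, each adding one further zero while the previously created simple zeros persist, because those zeros are hyperbolic. The correct bookkeeping is thus: $k-1$ independent constants give $k-2$ zeros by alternation, plus one zero from the trace and one from the sliding segment, totaling $k$. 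Without the pseudo-Hopf and trace steps the bound $k$ is structurally unreachable from $k-1$ independent constants (any $m$ freely assignable leading coefficients give only $m-1$ zeros), which is precisely why the paper's applications, Propositions~\ref{pro12lc} and~\ref{pro18lc}, invoke Proposition~\ref{prop_2lc} together with this result.
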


\section{An application to Palomba's model}\label{secpalomba}

We dedicate this section to showing a practical example of a piecewise smooth Kolmogorov system when $m=1$ with, at least, one crossing limit cycle surrounding the sliding segment. 

In 1939, Palomba considered an economic model where only two types of goods exist, consumption goods $\textbf{(a)}$ and capital goods $\textbf{(b)}$, see \cite{Gandolfo2008, Palomba}. The goods of type $\textbf{(a)}$ are ready for immediate consumption. Moreover, goods of type $\textbf{(b)}$ are used to produce new goods. Palomba made some assumptions. 
%
%
%
%
%
%
%
First of all, the economy is in a dynamic situation tending to increase its capital equipment. Then part of the goods of type $\textbf{(a)}$  are allocated to goods of type $\textbf{(b)}$. Given any instant, goods of type $\textbf{(a)}$ have a coefficient of increase equal $\eta_{2}$, while goods of type $\textbf{(b)}$ have a coefficient of increase equal to $-\eta_{1}$.  This means that, in the absence of the allocation described above, goods of type $\textbf{(a)}$ would increase continuously, while goods of type $\textbf{(b)}$ would decrease toward zero. On the other hand,  $-\gamma_{2}$ is the coefficient of decrease of the goods of type $\textbf{(a)}$,  due to the allocation, while $\gamma_{1}$ is the coefficient of increase of the goods of type $\textbf{(b)}$,  due to the same reason. Here,  $\eta_{1}$, $\eta_{2}$, $\gamma_{1}$, and $\gamma_{2}$ assume positive values. We will denote $x$ and $y$ the volume of goods of type $\textbf{(b)}$ and the volume of types $\textbf{(a)}$, respectively. Then, we can describe Palomba's model as follows:

\begin{equation}
	\begin{array}{lll}\label{Palomba}
		\begin{aligned}
			\dot{x}&=& -x(\eta_{1}-\gamma_{1} \, y),\\
			\dot{y}&=& y(\eta_{2}-\gamma_{2} \, x).
		\end{aligned}
	\end{array}
\end{equation}
The equilibrium points of the system \eqref{Palomba} are $q=(x_{1},y_{1})=(0,0)$ and $p=(x_{2},y_{2})=\left(\eta_{2}/\gamma_{2},\eta_{1}/\gamma_{1}\right)$. In addition, system~\eqref{Palomba} has integrating factor $V=\frac{1}{xy}$ and the system has a center around the point $p$.


\begin{figure}[h]\vspace{0.3cm}
	\begin{overpic}[width=5.5cm]
		{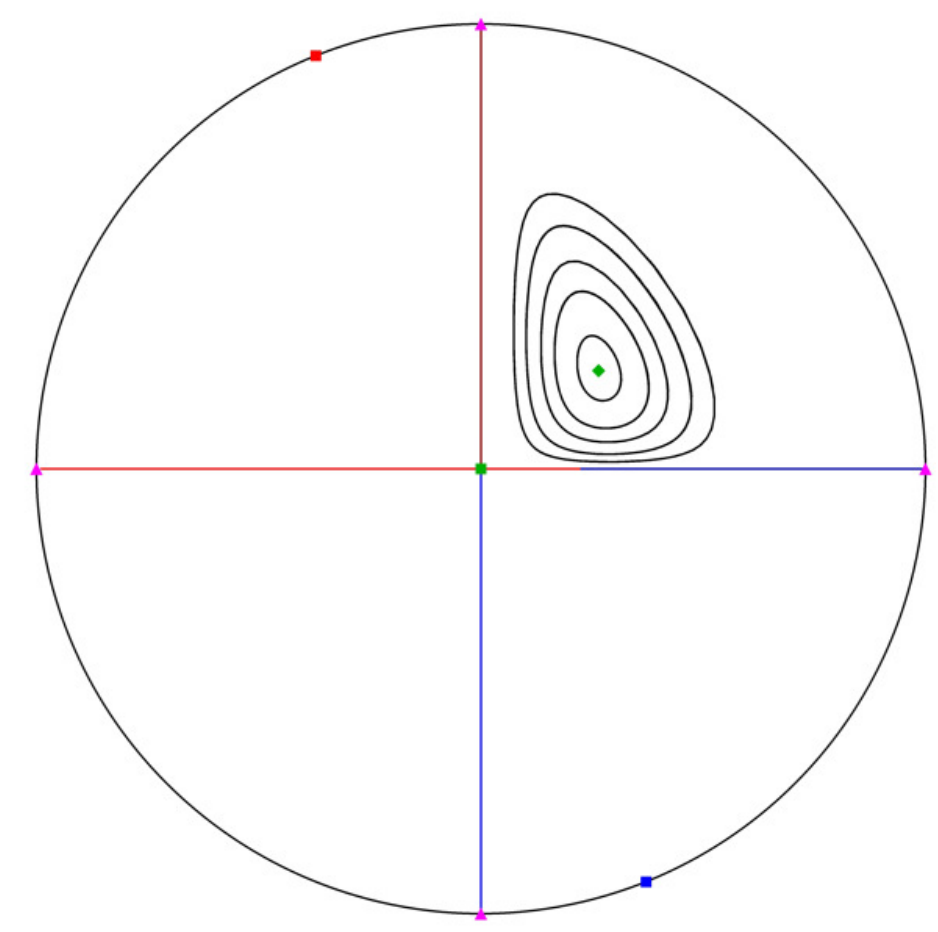}
	\end{overpic}
	\caption{ Phase portrait of the Palomba's model considering $\eta_{1}=1$, $\gamma_{1}=2$, $\eta_{2}=3$, and $\gamma_{2}=5$.}
\end{figure}

When dealing with a dynamic system where two or more variables can affect the growth of one from another, it is common to consider two types of models. For example, considering two species of animals, each species can grow indefinitely and we call this model of exponential growth. However, there can exist factors that impossibility growth at a constant rate, for example, scarcity of resources. In this model, called the logistical model, there are no unlimited resources, in other words, there exists a carrying capacity for the growth. 


%

%

Our goal in this section is to show the existence of one limit cycle in piecewise Palomba's system. Let us consider $\Sigma$ as a parallel segment to the $x-axi$s passing through the center $\left(\eta_{2}/\gamma_{2},\eta_{1}/\gamma_{1}\right)$.  We divide the plane into two half-planes, $\Sigma^{+}$ and $\Sigma^{-}$. In $\Sigma^{-}$ we will consider system~\eqref{Palomba}. In $\Sigma^{+}$,  as there are situations in which resources do not grow continuously, we will add a parameter $\epsilon$ multiplying the resource $y$, thus influencing its growth.

\begin{pro}\label{Palombas_cycle}
Consider the following piecewise smooth Palomba's system
	\begin{equation}\label{Palombadisc}
	\begin{aligned}
		\begin{array}{l}
			\dot{x}=- x \, (\eta_{1}-\gamma_{1} \, y), \; \; \; \; \; \; \; \mbox{for} \; y \geq \eta_{1}/\gamma_{1}, \\
			\dot{y}=\phantom{-}y \, (\eta_{2}+\varepsilon-\gamma_{2} \, x),
		\end{array}  \; \; \; \; \; \; \;  \; \; 
		\begin{array}{l}
			\dot{x}=-x \, (\eta_{1}-\gamma_{1} \, y), \; \; \mbox{for} \; y \leq \eta_{1}/\gamma_{1} \\
			\dot{y}=\phantom{-}y \, (\eta_{2}-\gamma_{2} \, x).
		\end{array}  \\
	\end{aligned}
\end{equation}
Then there exists, at least, one crossing limit cycle surrounding the sliding segment. 
\end{pro}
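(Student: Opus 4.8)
The plan is to reduce the statement to the pseudo-Hopf (sliding) bifurcation described in the Remark following Proposition~\ref{prop_2lc}. First I would translate the center $p=(\eta_2/\gamma_2,\eta_1/\gamma_1)$ to the origin through $(u,v)=(x-\eta_2/\gamma_2,\,y-\eta_1/\gamma_1)$, so that the switching line $\{y=\eta_1/\gamma_1\}$ becomes $\{v=0\}$. In these coordinates both half-systems are quadratic perturbations of the \emph{same} linear center: writing $a=\gamma_1\eta_2/\gamma_2$ and $k=\gamma_2\eta_1/\gamma_1$, the lower field reads $\dot u=av+\gamma_1 uv$, $\dot v=-ku-\gamma_2 uv$, while the upper field is the same Palomba field with $\eta_2$ replaced by $\eta_2+\varepsilon$ and therefore acquires, in addition, a constant term $\eta_1\varepsilon/\gamma_1$ and a linear term $\varepsilon v$ in the $\dot v$-equation. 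A linear rescaling of $(u,v)$ together with a rescaling of time by $\omega=\sqrt{\eta_1\eta_2}$ sends the common linear part to the harmonic oscillator, placing the system in the normal form \eqref{eqfn} on which the machinery of Section~\ref{se:preliminaries} applies.

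The second step is to read off the two roles played by the single parameter $\varepsilon$. The constant term $\eta_1\varepsilon/\gamma_1$ is the sliding (``$d$-type'') term: it removes the equilibrium from $\Sigma$ and opens, between the two centers $(\eta_2/\gamma_2,0)$ and $((\eta_2+\varepsilon)/\gamma_2,0)$, a sliding or escaping segment of length $O(\varepsilon)$; the linear term $\varepsilon v$ is the trace (``$c$-type'') term. Accordingly I would expand the displacement map as in the Remark, $\Delta(\rho)=L_0(\varepsilon)+L_1(\varepsilon)\rho+O(\rho^2)$, and observe that $L_0(0)=L_1(0)=0$, since at $\varepsilon=0$ both half-systems coincide with the integrable Palomba center and $\Delta\equiv 0$. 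Hence $L_0$ and $L_1$ are, to leading order, linear in $\varepsilon$, and a crossing cycle surrounding the sliding segment corresponds to a simple zero $\rho^{*}$ of $\Delta$ with $\rho^{*}$ exceeding the $O(\varepsilon)$ half-length of that segment.

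To locate this zero I would exploit that each half is explicitly integrable, with first integral $H^{\pm}=\gamma_2 x-(\eta_2+\varepsilon^{\pm})\ln x+\gamma_1 y-\eta_1\ln y$, where $\varepsilon^{+}=\varepsilon$ and $\varepsilon^{-}=0$. On $\Sigma$ the $y$-part is constant, so each half-return map is the involution determined by $g^{\pm}(x)=\gamma_2 x-(\eta_2+\varepsilon^{\pm})\ln x=\mathrm{const}$, and the coefficients $L_0,L_1$ reduce to elementary expressions in $g^{\pm}$ and their derivatives at the centers (equivalently, to the first-order Melnikov function obtained by integrating the perturbation against the integrating factor $V=1/(xy)$ over the period annulus). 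Showing that this quantity has a simple zero of the required sign, and invoking the pseudo-Hopf mechanism of the Remark (see also \cite{CasLliVer2017}), would produce the crossing limit cycle and yield $\mathcal M^{c}_{p_K}(2)\geq 1$.

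The main obstacle is precisely this sign/zero analysis. Because the switching line passes through both centers and the horizontal velocity vanishes on it ($\dot x=0$ when $y=\eta_1/\gamma_1$), the induced sliding field degenerates and the two half-return involutions are tightly constrained by the shared Lotka--Volterra structure; the symmetric matching $g^{+}(x_A)=g^{+}(x_B)$, $g^{-}(x_A)=g^{-}(x_B)$ by itself forces only $x_A=x_B$. The delicate point is therefore to extract the genuine nontrivial zero of $\Delta$ produced by the asymmetry $g^{+}-g^{-}=-\varepsilon\ln x$ between the two halves---that is, to compute the first nonvanishing Lyapunov/Melnikov coefficient carefully and verify it has the sign that forces a crossing cycle around the sliding segment---rather than the spurious symmetric solution.
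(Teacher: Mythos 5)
Your overall plan (translate $p$ to the origin, read off the constant term $\eta_1\varepsilon/\gamma_1$ as the pseudo-Hopf ``$d$-type'' parameter and $\varepsilon v$ as the trace term, expand $\Delta(\rho)=L_0(\varepsilon)+L_1(\varepsilon)\rho+\cdots$) is the same route the paper takes: its proof consists of invoking the pseudo-Hopf bifurcation and asserting that $\varepsilon$ can be chosen so that $\Delta$ has a root $\rho^{*}$. But your write-up is not a proof: the entire content of the proposition is the existence of that root, and you explicitly defer it (``the main obstacle is precisely this sign/zero analysis''). A pseudo-Hopf argument needs two ingredients of opposite sign --- a nonzero $L_0$ created by the sliding segment \emph{and} return dynamics around the segment with the opposite stability --- and you verify neither.

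More seriously, the exact computation you set up in your last paragraph, pushed one line further, shows this gap cannot be closed for system \eqref{Palombadisc}. Since both fields are vertical on $\Sigma$, a crossing periodic orbit surrounding the sliding segment meets $\Sigma$ in exactly two points $x_A\neq x_B$, one on each side of the segment, and by the first integrals it exists if and only if $g^{+}(x_A)=g^{+}(x_B)$ (upper half-loop) and $g^{-}(x_A)=g^{-}(x_B)$ (lower half-loop). This pair of equations is not a ``spurious symmetric solution'' to be discarded --- it \emph{is} the periodicity condition. Subtracting the two equations and using $g^{+}-g^{-}=-\varepsilon\ln x$ gives $\varepsilon(\ln x_A-\ln x_B)=0$, hence $x_A=x_B$ whenever $\varepsilon\neq0$: the system has no crossing periodic orbit at all. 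Equivalently, writing $\phi(x)=\gamma_2x-\eta_2\ln x$, one checks that for $\varepsilon<0$ every crossing orbit spirals inward toward the attracting sliding segment, and for $\varepsilon>0$ outward from the repelling escaping segment; the return dynamics always has the \emph{same} stability as the segment, which is precisely the configuration in which the pseudo-Hopf mechanism produces no cycle. So the missing step is not a delicate sign computation still to be done: your own identity proves the displacement function is sign-definite for $\varepsilon\neq 0$. Note that this also undercuts the paper's proof, which asserts without computation that a suitable choice of $\varepsilon$ yields a zero of $\Delta$; your integrable-structure argument, carried to its conclusion, contradicts that assertion rather than supporting it. To get a genuine crossing cycle one would have to break the shared integrable structure of the two halves (i.e., perturb so that $g^{+}-g^{-}$ is non-monotone on the relevant interval), not merely shift $\eta_2$ to $\eta_2+\varepsilon$.
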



\begin{proof}

%
%
The parameter $\varepsilon$ determines the sliding segment on $\Sigma$. When $\varepsilon$ varies, the stability of the sliding region changes. This phenomenon is the well-known pseudo-Hopf bifurcation, where one crossing limit cycle arises surrounding the sliding segment. Using Proposition~\ref{prop_2lc}, we can write the displacement function as
 \begin{equation*}
		\Delta{\rho}=L_{0}(c,\varepsilon)+L_{1}(c,\varepsilon) \, \rho+L_{2} \, \rho^2+\ldots,
	\end{equation*}	
where $c=\eta_{1}-\eta_{2}$. We can choose $\varepsilon$ appropriately such that $\Delta{\rho}$ has, at least, one solution $\rho^{*}$. Then we have at least, one crossing limit cycle. The parameter $c$ represents the trace of the Jacobian matrix of the linearized system. From this parameter, occur the classical Hopf bifurcation adding one more crossing limit cycle. However, in this example, when we vanish $c$ and $\varepsilon$, follow that $L_{2}=0$. Then, we can ensure that only the limit cycle from pseudo-Hopf bifurcation arises.
\end{proof}

The Figure~\ref{palombasmodel} shows the pseudo - Hopf Bifurcation for the Palomba's model
	\begin{equation}\label{Palombadisc1}
	\begin{aligned}
		\begin{array}{l}
			\dot{x}=-x \, (1-2 \, y), \; \; \; \; \; \; \; \mbox{for} \; y \geq 1/2, \\
			\dot{y}=\phantom{-}y \, (3 + \epsilon - 5 \, x ),
		\end{array}  \; \; \; \; \; \; \;  \; \; 
		\begin{array}{l}
		\dot{x}=-x \, (1-2 \, y), \; \; \; \; \; \; \; \mbox{for} \; y \leq 1/2, \\
		\dot{y}=\phantom{-}y \, (3  - 5 \, x ).
		\end{array}  \\
	\end{aligned}
\end{equation}
 In addition, to facilitate the computations, we make a change of coordinates to translate point $p$ to the origin. 


\begin{figure}[h]\vspace{0.3cm}\label{palombasmodel}
	\begin{overpic}[width=15.8cm]
		{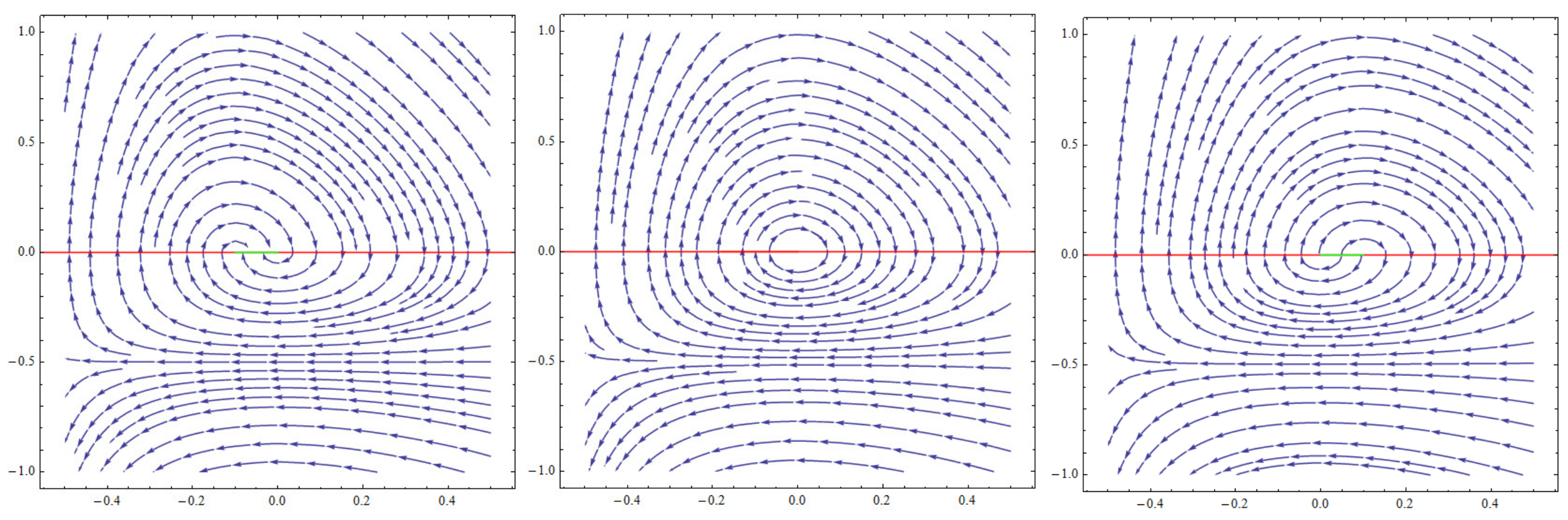}
		\put(15,-3){\textbf{a)}}
		\put(50,-3){\textbf{b)}}
		\put(85,-3){\textbf{c)}}
	\end{overpic}
	\vspace{0.6cm}
	\caption{ Phase portrait of the system~\eqref{Palombadisc1}. The green segment represents the sliding region. In the picture a) we consider $\varepsilon=-0.5$ and the sliding segment is an attractor. In the picture b), $\varepsilon=0$, there is no sliding segment and the point $p$ is a center. In the picture c), $\varepsilon=0.5$ and the sliding segment is repulsor. }
\end{figure}


\section{Bifurcation of limit cycles in Piecewise Smooth Kolmogorov systems}\label{se:bifurcation}	

In this section, we will dedicate to proving Theorem~\ref{teo}. The proof will be an immediate consequence of Proposition~\ref{prop_2lc} and from Propositions~\ref{pro12lc} and \ref{pro18lc} that we will present the following.
	
\begin{pro}\label{pro12lc}
	There exist parameters perturbations such that the center given by 
		\begin{equation}
			\begin{array}{lll}\label{komquar_1}
				\begin{aligned}
					\dot{x}&=& -\frac{x(2x - 8 - y)(y + 1)}{6},\\
					\dot{y}&=&\frac{y(2x - 2 - y)(x - 2)}{3},
				\end{aligned}
			\end{array}
		\end{equation}	
		produces, at least, twelve small limit cycles bifurcating from the center.    
\end{pro}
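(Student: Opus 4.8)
The plan is to deduce the statement from Proposition~\ref{thm:chrorder1} applied with $k=12$: it then suffices to produce a Kolmogorov-admissible piecewise perturbation of \eqref{komquar_1} for which the first-order Taylor parts (in the perturbation parameters, evaluated at the center) of the first eleven generalized Lyapunov constants $L_1,\dots,L_{11}$ are linearly independent as linear forms. Since \eqref{komquar_1} is a center, all $L_i$ vanish before perturbation, so each ``first-order Taylor development'' is simply the gradient of $L_i$ with respect to $\lambda^{\pm}$ at the unperturbed parameter value, and the whole problem reduces to a rank statement for the matrix of these gradients.

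First I would prepare the unperturbed field. The system \eqref{komquar_1} has a finite equilibrium $p$ lying off the coordinate axes (a common zero of the cubic cofactors other than $x$ and $y$); one checks that $p$ is monodromic with purely imaginary linear eigenvalues, and one certifies the center character via a Kolmogorov-type integrating factor, in the spirit of the $1/(xy)$ factor used for the Palomba model in Section~\ref{secpalomba}. Translating $p$ to the origin and applying a linear change of variables together with a time rescaling brings the linear part to the normal form $\dot x=-y,\ \dot y=x$ required by \eqref{eqfn}. Because \eqref{komquar_1} is of Kolmogorov type, the lines $\{x=0\}$ and $\{y=0\}$ are invariant; in the new coordinates these become two invariant lines through the origin, and the admissible perturbations are exactly those that preserve this pair of invariant lines, i.e. that keep the factorized form $\dot x=xX_3^{\pm}$, $\dot y=yY_3^{\pm}$ of the piecewise Kolmogorov system \eqref{eqkolpdesc} of degree $n=4$ on the two sides of $\Sigma=\{y=0\}$.

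Next I would compute the Lyapunov constants. Passing to polar coordinates as in \eqref{eqfn1} and solving the triangular sequence of linear ODEs for the coefficients $r_i^{\pm}(\theta,\lambda^{\pm})$ with $r_i^{\pm}(0,\lambda^{\pm})=0$, I would evaluate the half-return maps at $\theta=\pi$ and $\theta=-\pi$ and read off $L_1,\dots,L_{11}$ from the displacement map \eqref{displacement}. Retaining only the linear parts $L_i^{(1)}$ of these constants in the cubic-cofactor coefficients of $X_3^{\pm},Y_3^{\pm}$ yields an $11\times(\#\text{params})$ matrix, and the goal is to show it has rank $11$ over the Kolmogorov-admissible parameter subspace, equivalently to exhibit a nonzero $11\times11$ minor. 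Here it is essential that the even-index constants be included: unlike the smooth case, they contribute to the cyclicity in the piecewise setting (cf. \cite[Remark 3.3]{GouvTorre2021}), which is precisely what makes the higher count possible.

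The main obstacle is this rank computation. For a degree-four piecewise system the constants $L_1,\dots,L_{11}$ are large rational expressions, so the verification must be organized with a computer algebra system. I would control the bookkeeping by (i) using the invariance of the two lines to cut down the admissible monomials and hence the relevant parameters, and (ii) specializing all but eleven ``active'' perturbation directions to convenient numerical values, so that linear independence of the $L_i^{(1)}$ reduces to checking that one explicit $11\times11$ determinant is nonzero. One must also confirm that the chosen active directions stay inside the Kolmogorov class and that the perturbed origin remains monodromic at the orders involved. Once this nonvanishing minor is produced, the hypotheses of Proposition~\ref{thm:chrorder1} hold with $k=12$, and the proposition yields at least twelve crossing limit cycles of small amplitude bifurcating from the center, which is the claim.
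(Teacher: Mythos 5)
Your outline does follow the paper's strategy in its broad strokes: certify the center via an explicit integrating factor, translate to the normal form \eqref{eqfn}, compute the linear parts (in the perturbation parameters) of the generalized Lyapunov constants, and convert a rank-eleven statement into twelve crossing limit cycles, the twelfth cycle coming from the sliding/pseudo-Hopf mechanism of Proposition~\ref{prop_2lc} that is built into the ``first $k-1$ independent constants give $k$ cycles'' form of Proposition~\ref{thm:chrorder1}. However, two concrete points separate your plan from the paper's proof. First, you misread the degree: system \eqref{komquar_1} is a \emph{cubic} Kolmogorov system ($m=2$, hence $n=3$) with quadratic cofactors, not a quartic system with cofactors $X_3^{\pm},Y_3^{\pm}$ as you assert twice. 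This proposition is precisely the $n=3$ entry of Theorem~\ref{teo}; perturbing with degree-four terms would at best prove a statement about quartic systems (the subject of Proposition~\ref{pro18lc}) and would not yield $\mathcal M^{c}_{p_K}(3)\geq 12$. Second, the paper's computation is not performed in the Kolmogorov-constrained parameter space you insist on: the paper perturbs the normal form \eqref{komquar_cub} with \emph{arbitrary} piecewise quadratic terms, i.e.\ \eqref{systemperturbated} with $n=2$, and verifies that the Jacobian of the linear parts of the first \emph{fourteen} Lyapunov constants with respect to the twelve coefficients $a_{kl}^{\pm},b_{kl}^{\pm}$, $k+l=2$, has rank eleven. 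Your restriction to line-preserving perturbations is arguably more faithful to the definition of $\mathcal M^{c}_{p_K}(n)$, but it is a genuinely different (and smaller, differently embedded) parameter space, so the paper's rank computation neither follows from nor implies the one your argument needs; you would have to redo it from scratch.

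Your constrained setting also carries a difficulty that you flag but do not resolve. A Kolmogorov perturbation of the cofactors, rewritten in the translated coordinates where the center sits at the origin, generically produces constant and linear terms, because the invariant lines do not pass through the origin; so multiplying a cofactor perturbation by $x$ or $y$ in the old variables does not kill low-order terms in the new ones. Proposition~\ref{thm:chrorder1} explicitly requires perturbations without constant or linear terms, so you must either restrict to cofactor perturbations vanishing to second order at the center --- shrinking the admissible space further, with no guarantee that rank eleven survives --- or extend the bifurcation framework to handle the moving equilibrium. Two smaller caveats: the paper needed fourteen constants to reach rank eleven, so your assumption that $L_1,\dots,L_{11}$ alone are already independent is optimistic; and since the entire mathematical content of this proposition is the explicit rank verification, deferring it to an unspecified computer-algebra run leaves your text as a plausible plan rather than a proof.
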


\begin{proof}
	
The system \eqref{komquar_1} has integrating factor 

\begin{equation*}
	\frac{5(2x + y)}{xy(4x^2 - y^2 - 16x - 2y)},
\end{equation*}
then has a center in the origin. Making a change of coordinates to put the system~(\ref{komquar_1}) in the normal form, we obtain

\begin{equation}
	\begin{array}{lll}\label{komquar_cub}
		\begin{aligned}
			\dot{x}&=& 3x^2y + 1/2yx - 3/2y^2x - y^2 - y,\\
			\dot{y}&=&-6x^2y + 3xy^2 + 2x^2 - 4xy + x,
		\end{aligned}
	\end{array}
\end{equation}	

\begin{figure}[h]\vspace{0.3cm}
	\begin{overpic}[width=5.5cm]
		{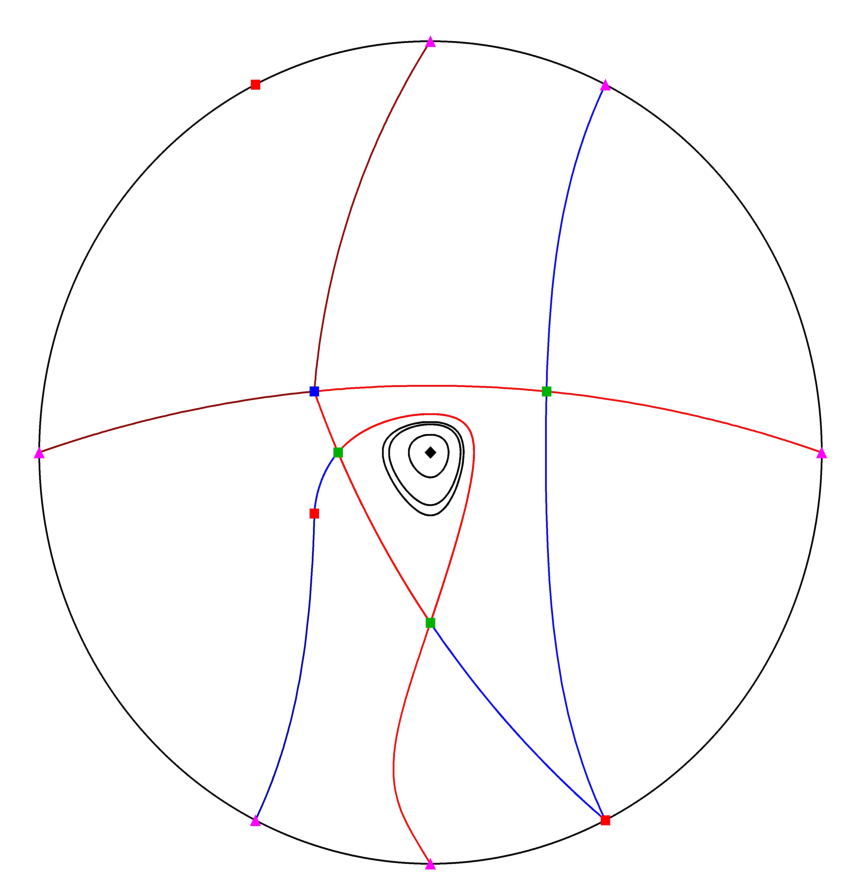}
	\end{overpic}
	\caption{ Phase portrait of the Kolmogorov system~\eqref{komquar_cub}.}
\end{figure}

\begin{figure}[h]\vspace{0.3cm}
	\begin{overpic}[width=5.5cm]
		{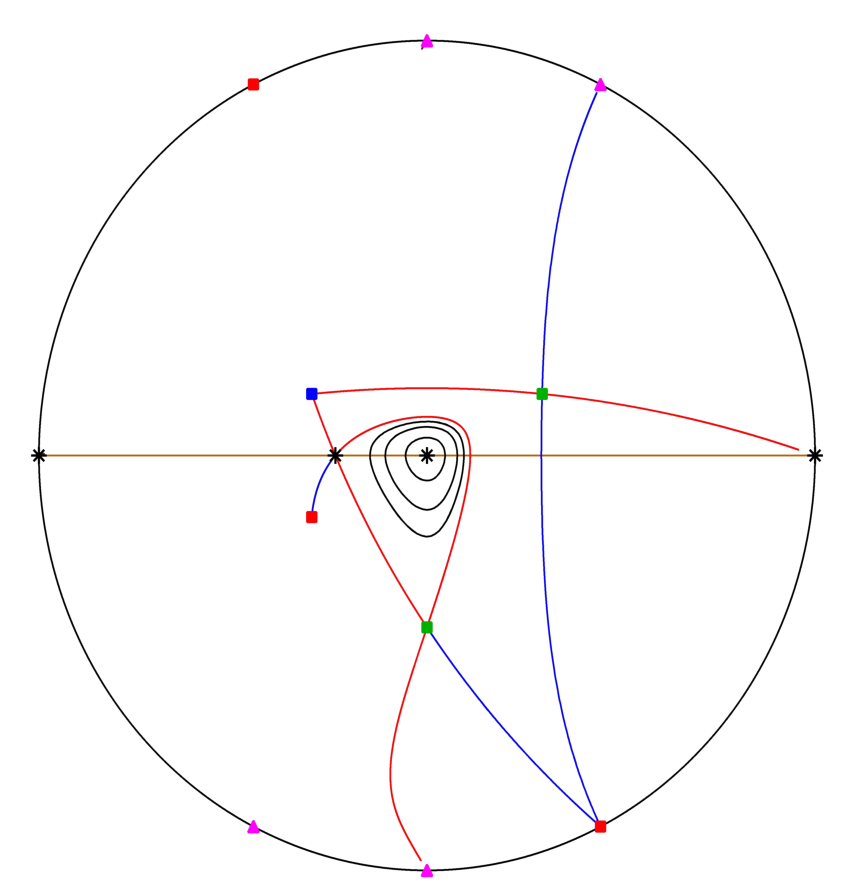}
	\end{overpic}
	\caption{ Phase portrait of the piecewise smooth Kolmogorv system~\eqref{komquar_cub}.}
\end{figure}

Perturbating the system \eqref{komquar_cub} as in \eqref{systemperturbated} with $n=2$ and calculating the linear terms of the first fourteen Lyapunov Constants, we obtain that the rank of the Jacobian Matrix regarding $\Lambda=\left\{a^{\pm}_{20}, a^{\pm}_{11}, a^{\pm}_{02}, b^{\pm}_{20}, b^{\pm}_{1,1}, b^{\pm}_{0,2}, \right\}$ is eleven, with the parameters  $a^{-}_{0, 2}$, $a^{-}_{1, 1}$, $a^{-}_{2, 0}$, $a^{+}_{0, 2}$, $a^{+}_{1, 1}$, $a^{+}_{2, 0}$, $b^{-}_{0, 2}$, $b^{-}_{1, 1}$, $b^{-}_{2, 0}$, $b^{+}_{0, 2}$, $b^{-}_{1, 1}$. Then, it follows from using Proposition~\ref{prop_2lc} 12 small limit cycles bifurcate from the center. Due to the size of the Lyapunov Constants, we will include only the linear terms of the first three Lyapunov Constants.
		\begin{equation*}
			\begin{aligned}
				L_1= & -\frac{2}{3}(-a^{+}_{1, 1} + 2b^{+}_{0, 2} + b^{+}_{2, 0} + a^{-}_{1, 1} - 2*b^{-}_{0, 2} - b^{-}_{2, 0}), \\
				L_2= & \frac{\pi}{8}(2 a^{-}_{0, 2} + a^{-}_{2, 0} + 2 a^{+}_{0, 2} - 2 a^{+}_{1, 1} + a^{+}_{2, 0} + 5 b^{-}_{0, 2} + 2 b^{-}_{1, 1} - 2 b^{-}_{2, 0} + 9 b^{+}_{0, 2} + 2 b^{+}_{1, 1}), \\
				L_{3} = &\frac{8}{45}(11a^{-}_{0, 2} + 10a^{-}_{1, 1} + 4a^{-}_{2, 0} - 11a^{+}_{0, 2} - 10a^{+}_{1, 1} - 4a^{+}_{2, 0} - 3b^{-}_{0, 2} - b^{-}_{1, 1} + 3b^{+}_{0, 2} + b^{+}_{1, 1}).
			\end{aligned}		
		\end{equation*}
\end{proof}
	
\begin{pro}\label{pro18lc} There exist parameters perturbations such that the center given by 
	\begin{equation}
		\begin{array}{lll}\label{komquar_4}
			\begin{aligned}
				\dot{x}&=& -x(y - 1)(29x^2 - 40xy + 40y^2 + 162x + 140y),\\
				\dot{y}&=&y(x - 1)(29x^2 - 40xy + 40y^2 + 162x + 140y),
			\end{aligned}
		\end{array}
	\end{equation}	
	produces, at least, eighteen small limit cycles bifurcating from the center.    
\end{pro}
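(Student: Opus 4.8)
The plan is to reproduce, one degree higher, the argument used for Proposition~\ref{pro12lc}. The first task is to certify that \eqref{komquar_4} is a center and to locate it. Writing the common quadratic factor as $C(x,y)=29x^2-40xy+40y^2+162x+140y$, the system takes the form $\dot x=-x(y-1)\,C$, $\dot y=y(x-1)\,C$, so $C$ cancels in the orbit equation and one is left with the separable relation $\tfrac{y-1}{y}\,dy=-\tfrac{x-1}{x}\,dx$. Integrating gives the Lotka--Volterra first integral $H(x,y)=x+y-\ln x-\ln y$, equivalently the integrating factor $1/(xy\,C(x,y))$. Since $H$ has a nondegenerate minimum at $(1,1)$ (its Hessian there is the identity) and the Jacobian of \eqref{komquar_4} at that point equals $331$ times the standard rotation, the point $(1,1)$ is a center. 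I would then translate $(1,1)$ to the origin and rescale time so as to bring the field into the normal form \eqref{eqfn} with purely imaginary linear part, exactly as \eqref{komquar_cub} was produced from \eqref{komquar_1}.

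Next I would perturb this normal form as in \eqref{systemperturbated}, switching on the perturbation parameters $a^{\pm}_{kl},b^{\pm}_{kl}$ admissible for a degree-four field on each side of $\Sigma=\{y=0\}$, and run the Lyapunov algorithm recalled in \eqref{eqfn1}--\eqref{displacement}: solve recursively for the coefficients $r^{\pm}_i(\theta)$, evaluate the half-return maps $\Pi^{+}(\rho)$ and $(\Pi^{-})^{-1}(\rho)$ at $\theta=\pi$ and $\theta=-\pi$, and read off the generalized Lyapunov constants $L_i$ from the difference map $\Delta(\rho)=\sum_i L_i\,\rho^i$. Because Christopher's criterion uses only first-order information, I would expand each $L_i$ to first order in the parameters at the unperturbed center and record the linear forms $DL_i$, computing at least $L_1,\dots,L_{17}$ together with a few further constants as a consistency check that the rank has stabilized.

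The decisive step is then linear algebra over the parameter space: assemble the matrix whose rows are $DL_1,DL_2,\dots$ and show that its rank equals $17$, by exhibiting an explicit set of $17$ perturbation parameters on which a $17\times17$ minor is nonzero, while verifying that the first seventeen constants $L_1,\dots,L_{17}$ are already linearly independent. Once this is established, Christopher's criterion, Proposition~\ref{thm:chrorder1} applied with $k=18$, yields $18$ nested crossing limit cycles of small amplitude bifurcating from the origin; in the piecewise setting the final one is the pseudo-Hopf cycle of Proposition~\ref{prop_2lc}, made available by the even-index Lyapunov constants as discussed after \eqref{displacement}.

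The main obstacle is computational rather than conceptual. For a quartic field the trigonometric integrals defining $r^{\pm}_i(\theta)$ grow extremely fast, so reaching $L_{17}$ and manipulating the resulting large symbolic expressions demands a carefully staged computation and, in the end, a checkable rank certificate for the Jacobian. The delicate point is to confirm that the rank is \emph{exactly} $17$: not accidentally smaller, which would happen if the selected perturbation directions fail to be independent, and not requiring constants beyond those computed. The choice of the particular center \eqref{komquar_4}, with its convenient common-factor (Lotka--Volterra) structure, is precisely what keeps the integrations tractable and makes the rank as large as possible for the available parameters.
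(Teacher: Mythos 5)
Your proposal follows essentially the same route as the paper's proof: certify the center at $(1,1)$ by cancelling the common quadratic factor $C(x,y)$ and invoking the Lotka--Volterra integrating factor $1/(xy)$ (the paper phrases this as the product of a center with a curve of equilibria), translate and rescale into the normal form, compute the first-order developments of the generalized Lyapunov constants of the perturbed piecewise system, and conclude via Christopher's criterion, Proposition~\ref{thm:chrorder1}, together with the pseudo-Hopf cycle, from a rank-$17$ Jacobian. This is exactly the paper's argument; in fact your write-up is internally more consistent than the paper's text, which carries copy-paste slips (rank ``eleven'' and ``fourteen'' constants) while listing the same seventeen independent parameters and drawing the same conclusion of eighteen crossing limit cycles.
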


\begin{proof}
	Putting the system~\eqref{komquar_4} in normal form, we obtain the following system 
	
	\begin{equation*}
		\begin{array}{lll}\label{komquar_5}
			\begin{aligned}
				\dot{x}&=&-\frac{1}{331}y(x + 1)(29x^2 - 40xy + 40y^2 + 180x + 180y + 331),\\
				\dot{y}&=&\frac{1}{331}x(y + 1)(29x^2 - 40xy + 40y^2 + 180x + 180y + 331).
			\end{aligned}
		\end{array}
	\end{equation*}	

Observe that the system
\begin{equation*}
	\begin{array}{lll}\label{komquar_center1}
		\begin{aligned}
			\dot{x}=& x(y - 1), \; \; \; \; \;
			\dot{y}=&y(x - 1),
		\end{aligned}
	\end{array}
\end{equation*}	
has integrating factor $V=\frac{1}{xy}$. It then follows that system~\eqref{komquar_center1} has a center.  As system \eqref{komquar_4} is system \eqref{komquar_center1} multiplied by a quadratic curve of equilibrium points, it follows that system \eqref{komquar_4} has a center. 

\begin{figure}[h]\vspace{0.3cm}
	\begin{overpic}[width=5.5cm]
		{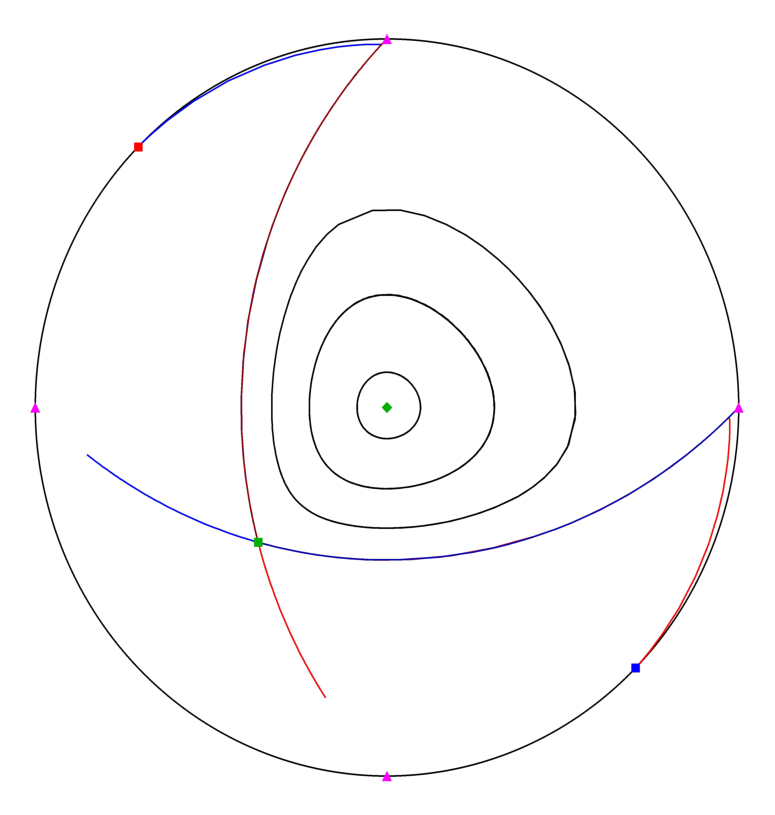}
	\end{overpic}
	\caption{ Phase portrait of Kolmogorov system~\eqref{komquar_4}.}
\end{figure}

\begin{figure}[h]\vspace{0.3cm}
	\begin{overpic}[width=5.5cm]
		{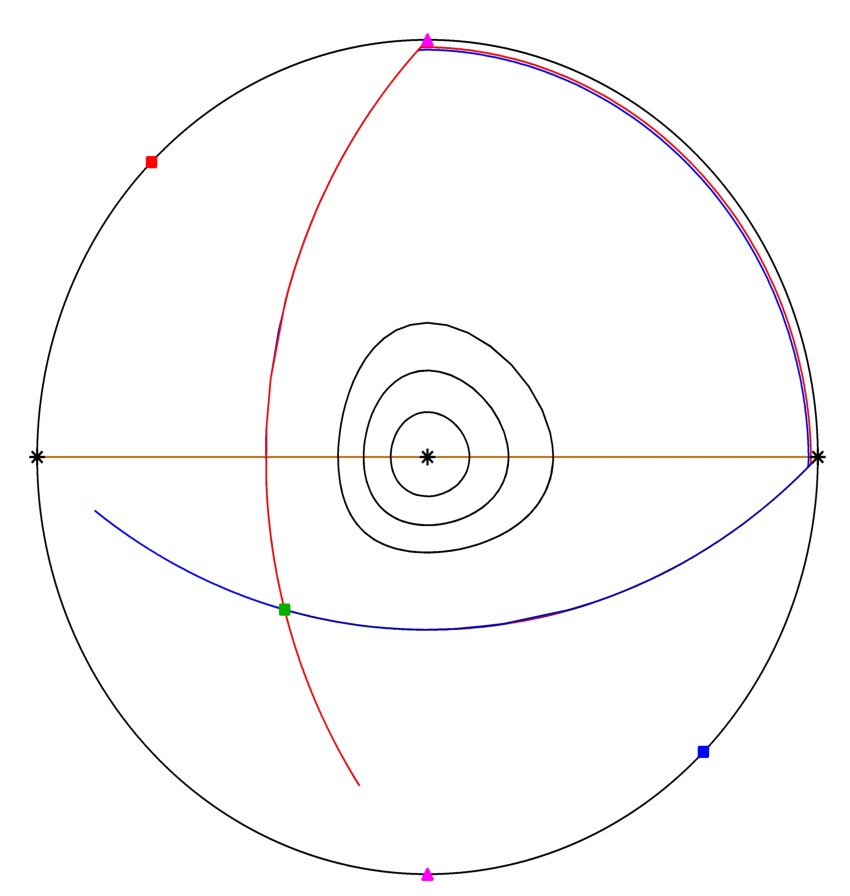}
	\end{overpic}
	\caption{ Phase portrait of the piecewise smooth Kolmogorov system~\eqref{komquar_4}.}
\end{figure}

Following the same scheme as in the previous preposition, calculating the linear terms of the first fourteen Lyapunov Constants, we obtain the rank of the Jacobian Matrix regarding parameters \[\Lambda=\left\{a^{\pm}_{30},a^{\pm}_{21},a^{\pm}_{12},a^{\pm}_{03},a^{\pm}_{20}, a^{\pm}_{11}, a^{\pm}_{02}, b^{\pm}_{30},b^{\pm}_{21},b^{\pm}_{12},b^{\pm}_{03},b^{\pm}_{20}, b^{\pm}_{1,1}, b^{\pm}_{0,2}, \right\}\] is eleven, with the parameters $a^{-}_{0, 3}$,  $a^{-}_{1, 2}$, $a^{-}_{2, 0}$, $a^{-}_{2, 1}$, $a^{-}_{3, 0}$, $a^{+}_{0, 3}$, $a^{+}_{1, 1}$, $a^{+}_{1, 2}$, $a^{+}_{2, 0}$, $a^{+}_{2, 1}$, $a^{+}_{3, 0}$, $b^{-}_{0, 2}$, $b^{-}_{0, 3}$, $b^{-}_{1, 1}$, $b^{-}_{2, 0}$, $b^{+}_{0, 2}$, $b^{+}_{1, 1}$. Again, using Proposition~\ref{prop_2lc}, it follows that 18 small limit cycles bifurcate from the center. Due to the size of the Lyapunov Constants, we will again show only the linear terms of the first three Lyapunov Constants. 
	
	\begin{equation*}
	\begin{aligned}
		L_1= & -\frac{2}{3}(-a^{+}_{1, 1} + a^{-}_{1, 1} - 2 b^{+}_{0, 2} + 2 b^{-}_{0, 2} - b^{+}_{2, 0} + b^{-}_{2, 0}), \\
		L_2= & \frac{\pi}{2648}(180 a^{-}_{0, 2} + 360 a^{-}_{1, 1} - 331 a^{-}_{1, 2} - 122 a^{-}_{2, 0} - 993 a^{-}_{3, 0} + 180 a^{+}_{0, 2} - 331 a^{+}_{1, 2} - 122 a^{+}_{2, 0} \\ &- 993 a^{+}_{3, 0} + 238 b^{-}_{0, 2} - 993 b^{-}_{0, 3} + 180 b^{-}_{1, 1} + 360 b^{-}_{2, 0} - 331 b^{-}_{2, 1} - 482 b^{+}_{0, 2} - 993 b^{+}_{0, 3}\\ & + 180 b^{+}_{1, 1} - 331 b^{+}_{2, 1}), \\
		L_{3} = &-\frac{2}{4930245} (349080 a^{-}_{0, 2} - 357480 a^{-}_{0, 3} - 138358 a^{-}_{1, 2} + 304498 a^{-}_{2, 0} + 121146 a^{-}_{2, 1}\\ & - 864903 a^{-}_{3, 0} - 349080 a^{+}_{0, 2} + 357480 a^{+}_{0, 3} + 138358 a^{+}_{1, 2} - 304498 a^{+}_{2, 0} - 121146 a^{+}_{2, 1}\\ & + 864903 a^{+}_{3, 0} - 82088 b^{-}_{0, 2} \\ &+ 1199544 b^{-}_{0, 3} + 349080 b^{-}_{1, 1} - 357480 b^{-}_{1, 2} - 138358 b^{-}_{2, 1} + 121146 b^{-}_{3, 0} + 82088 b^{+}_{0, 2}\\ & - 1199544 b^{+}_{0, 3} - 349080 b^{+}_{1, 1} + 357480 b^{+}_{1, 2} + 138358 b^{+}_{2, 1} - 121146 b^{+}_{3, 0}).
	\end{aligned}		
\end{equation*}	
\end{proof}

\begin{remark}
 In the examples shown in this work, we believe it may be possible to obtain more limit cycles using the high-order terms of Lyapunov Constants. However, we are limited by computational power. Even using the parallelization tool, see \cite{GouTor2021_2}, calculating high-order terms of the Lyapunov Constants for piecewise smooth systems demands a high computational cost. Therefore, to advance in lower bounds of limit cycles for piecewise systems, finding a more efficient algorithm is fundamental.
\end{remark}


	\section*{Acknowledgements}
	
	Yagor Romano Carvalho was supported by São Paulo Paulo Research Foundation (FAPESP) grants number 2022/03800-7, 2021/14695-7. Luiz F.S. Gouveia was supported by São Paulo Paulo Research Foundation (FAPESP) grants number 2022/03801-3, 2020/04717-0.

\bibliographystyle{abbrv}
\bibliography{biblio.bib} 
\end{document}